\newcommand{\Z}{\mathbb{Z}}
\newcommand{\F}{\mathbb{F}}
\theoremstyle{plain}
\newtheorem{theorem}{Theorem}[section]
\newtheorem{conjecture}[theorem]{Conjecture}
\theoremstyle{definition}
\newtheorem*{remark*}{Remark}
\newtheorem*{remarks*}{Remarks}
\numberwithin{table}{theorem}
\begin{document}

\author{Jos\'e Alejandro Lara Rodr\'iguez and Dinesh S. Thakur}
\thanks{The authors supported in part by PROMEP grant F-PROMEP-36/Rev-03
SEP-23-006 and
by NSA grant  H98230-13-1-0244 respectively}

\title[Zeta-like Multizeta Values]{Zeta-like 
Multizeta Values for $\F_q[t]$}

\address{Department of Mathematics, University of Arizona, Tucson, AZ 85721 USA,
thakur@math.arizona.edu and Department of Mathematics, University of Rochester, 
Rochester, NY 14627 USA, dinesh.thakur@rochester.edu}

\address{
Facultad de Matem\'aticas, Universidad Aut\'onoma de Yucat\'an, Perif\'erico
Norte, Tab. 13615,
M\'erida, Yucat\'an, M\'exico, lrodri@uady.mx}

\dedicatory{Dedicated to the genius of Srinivasa Ramanujan}


\date{\today}

\begin{abstract}
We prove and conjecture several relations between multizeta 
values for $\F_q[t]$, focusing on zeta-like values, namely those 
whose ratio with the zeta value of the same weight is rational 
(or equivalently algebraic). In particular, we describe them 
 conjecturally fully for $q=2$, or more generally for 
 any $q$ for `even' weight (`eulerian' tuples).  We provide some data in support 
of the guesses. 
\end{abstract}

\maketitle

\section{Introduction}

Relations between multizeta values defined by Euler have been investigated 
extensively for the last two decades, and the conjectural forms of 
these relations have  many structural connections with several interesting areas 
(see \cite{Ct, Z} and references there) of mathematics. 
In some sense, the relations have been at least conjecturally 
understood, though much remains to be proved and relating the general 
framework to specific instances 
is often hard. 

We will look at the function field analog \cite{T, AT, Tm, Tbanff, LR, L12}, where
the relations 
are still not conjecturally understood, though in contrast, there 
are also some very strong results transcendence and independence 
results \cite{ CY, Ch, CPY} proved. 

While for the Euler multizeta values, the relations come via comparing the 
two families of shuffle relations, in our function field setting, there is 
only one shuffle family \cite{Ts}. While the rational number field is the prime 
field in characteristic zero giving coefficients for the relations, 
in the function field case the prime field is not the analogous rational function 
field, but just the finite field, which does not see all the relations. Some such relations 
were proved in \cite{Tm, LR}. 

While second author's student George Todd is doing extensive numerical study 
of general relations using an analog of the `LLL method', in this paper 
we focus on the two term relations of special type, namely 
zeta-like multizeta, i.e., those whose ratio with the (Carlitz) zeta 
value of the same weight is rational (or equivalently algebraic). 
We provide several results, and conjectures, with full conjectural description for 
$q=2$, or more generally for any $q$ with `even' weight (`eulerian' tuples).

We  first fix the notation and give the basic definitions.  
Next we summarize the known and the new results on zeta-like values, 
and state the conjectures. Then we give the proofs 
of the results. 
 Finally we discuss the  numerical data, calculated by the first author,
  giving some evidence for the  conjectures made from it.    

\pagebreak

\section{Notation and Basic definitions}

\begin{tabular}{l l}
$\Z$  & \{integers\},\\
$\Z_+$ &  \{positive integers\},\\
$q$ &  a power of a prime $p$, $q = p^s$,\\
$\F_q$ & a finite field of $q$ elements,\\
$A$ & the polynomial ring $\F_q[t]$, $t$ a  variable\\
$A_+$ &   monics in  $A$,\\
$K$ &   the function field $\F_q(t)$,\\
$K_\infty$ & $\F_q((1/t)) = $ the completion of $K$ at $\infty$,\\
$A_{d^+}$ & $\{\mbox{elements of } A_+ \mbox{of degree }d\}$, \\
$[n]$ &   $ t^{q^n}-t$,\\
$\ell_n$ & $ \prod _{i= 1} ^n (t-t^{q^i})= (-1)^n L_n = (-1)^n [n][n-1]\dotsm
[1]$, \\
`even' & $  \mbox{multiple of } q-1$,\\
\end{tabular}

We first recall definitions of power sums, iterated power sums, zeta 
and multizeta values \cite{T, Tm}. 

For $s\in \Z_+$ and $d\geq 0$, write
\begin{align*}
S_d(s) := \sum _{ a\in A_{d^+}} \frac{1}{a^{s}} \in K.
\end{align*}
(This is  $S_d(-s)$ in the notation of \cite{T}.)

Given integers $s_i\in \Z_+$ and $d\ge 0$ put
\begin{align*}
  S_d(s_1,\dotsc,s_r) = S_d(s_1) \sum _{d > d_2 >\dotsb > d_r \ge 0}
S_{d_2}(s_2) \dotsm S_{d_r}(s_r) \in K.
\end{align*}

For $s_i\in\Z+$, we define multizeta values 
\begin{align*}
\zeta(s_1,\dotsc,s_r):= \sum _{d_1>\dotsb >d_r\ge 0} S_{d_1}(s_1) \dotsm
S_{d_r}(s_r)= \sum \frac{1}{a_1^{s_1}\dotsm a_r^{s_r} }\in K_\infty,
\end{align*}
where the second sum is over all $a_i\in A_+$ of degree $d_i$ such that
$d_1>\dotsb >d_r\ge 0$. We say that this multizeta value (or rather the tuple
$(s_1, \dotsc, s_r)$)
has depth $r$ and
weight  $\sum s_i$. In depth one, we recover the Carlitz zeta. 

We refer to  \cite{C1, G, T} for background on this and general 
function field analogies. Carlitz proved  
analog of Euler's result that for `even' $s$, $\zeta(s)$ 
is rational multiple of $\tilde{\pi}^s$, where the Carlitz period 
$\tilde{\pi}$ is analog of $2\pi i$.

A multizeta value $\zeta(s_1, \dotsc, s_r)$ of depth $r$ (or  the $r$-tuple
$(s_1, \dotsc, s_r)$) is \emph{zeta-like} if the ratio
\begin{align*}
\zeta(s_1, \dotsc, s_r)/\zeta(s_1 + \dotsb + s_r)
\end{align*}
is rational. (We always use depth $r>1$ below, sometimes 
without mention, because  in the $r=1$ case everything is  
 zeta-like by definition). 
A multizeta value of weight $w$ is called \emph{eulerian}, if it is a
rational multiple of $\tilde{\pi}^w$. So eulerian is a special
case of zeta-like for `even' weight, by Carlitz result (mentioned 
above) which says that  in depth one,
all the zeta values of `even' weight are eulerian.

A strong transcendence result \cite{Ch} proved in the function 
field case shows that if the ratios in the definition are not 
rational, they are not even algebraic and in fact the multizeta 
and corresponding zeta values are then algebraically independent. 
Another strong transcendence result \cite{CY} shows that 
the Carlitz zeta value of not `even' weight and $\tilde{\pi}$ are algebraically independent. 

Since $\zeta(ps_1, \dotsc, ps_r)=\zeta(s_1, \dotsc, s_r)^p$, in all
the discussion we can restrict to tuples where not all $s_i$'s are divisible 
by $p$. We call such tuples \emph{primitive}.

\section{Old and new results on Zeta-like values}

For the Euler multizeta in the number field case, the classical sum shuffle 
relation immediately implies that $\zeta(2n, 2n)$ are eulerian, and combined with the 
usual transcendence conjectures, it implies that $\zeta(2n+1, 2n+1)$ are not 
zeta-like. In the function field case, the classical sum shuffle relation 
does not hold in general, so we only know by this method \cite[Thm. 5.10.6]{T} that, when $p\neq 2$,  
that  $\zeta(kp^n, kp^n)$ is not zeta-like, 
if $2k\leq q$ (\cite{CY} giving the required transcendence result). 
(Another instance of different shuffle \cite[Thm. 6.3]{L12} similarly 
shows that $\zeta(q^n-1, q^n)$ is not zeta-like, for $q>2$). 
In \cite{T, Tm, LR}, more  examples of zeta-like and non-zeta-like values 
of `even' and `odd' weights were proved. Combining with general shuffle 
relations \cite{Ts}, some more such results can be proved. 
But we have now proved 
much stronger results, which we will recall below. 

In \cite{CPY}, using the interpretation \cite{AT} of multizeta 
values as periods of iterated extensions of tensor powers 
of Carlitz-Anderson $t$-motives, it was proved that if
$\zeta(s_1, \dotsc , s_r)$ is zeta-like (eulerian in
the first version), then $\zeta(s_2, \dotsc, s_r)$ is
eulerian, so that all $\zeta(s_k, \cdots, s_r)$ are eulerian
and $s_i$ are `even', for $i\geq 2$. (See \cite[5.3]{Tm}). 

{\bf Remark} This implies some, but not all, of the 
non-zeta-like special results mentioned above. Many can be proved 
by direct appeal to \cite{CY} and shuffle and other results proved, 
a few (such as $\zeta(2, 1)$ is not zeta-like for $q=2$) 
 were proved \cite[Thm. 5.10.12]{T} without using \cite{CY}. 

While \cite{CPY} was being proved for the eulerian case, 
we had conjectured this (and a few more implications) for zeta-like case, but 
only in depth 2 and were starting calculations 
in general depth, which give many interesting conjectural 
restrictions recalled below. 

We now state some families of zeta-like (so eulerian, if 
the weight is `even') multizeta values of depth two. 
 Proofs will be given in the fifth section. 

\begin{theorem}\label {main} For any (prime power) $q$, we have 
\begin{align}
\label{main1}
\zeta(q^n- \sum _{i=1}^s q^{k_i}, (q-1)q^n)
= \frac{(-1)^s}{\ell_1^{q^n}} \prod _{i=1}^s [n-k_i]^{q^{k_i}}
\zeta(q^{n+1} - \sum _{i=1}^s q^{k_i}),
\end{align}
where $n>0$, $1 \le s <q$, $0 \le k_i <n$.

Let $n\geq 0$, $0\le k_i\le n+1$, $1\leq s_1\le q$, $0\leq s_2\leq q-s_1$. 
Then for $a = s_1 q^n$ and $b = s_1(q^{n+1}-q^n) + \sum_{i=1}^{s_2}
(q^{n+1}-q^{k_i})$, we have 
  \begin{align}
\label{main2}
\zeta(a, b) = \frac{1}{\ell_1^{s_1q^n}}\zeta(a+b).
\end{align}

\begin{align}
\label{main3}
\zeta(q^2-(q-1), (q-1) (q^{2}+1))
&=
\frac{1-[2]^q}{\ell_1^{q^2-1} \ell_2}
\zeta(q^3),\\
\label{main4}
\zeta(2q-1, (q-1)(q^2+q-1)) & = \frac{1-[2]^q}{\ell_1^{q+1}
\ell_2^{q-1}}
\zeta(q^3).
\end{align}

\begin{align}
\label{main5}
\zeta(1,q^2-1) & = \zeta(q^2)(1/\ell_1 + 1/\ell_2).
\end{align}

For $q>2$, $n\geq 0$ and $-1 \le j \le n$,

\begin{align}
\label{main6}
\zeta((q-1)q^{n} -1, (q-1)q^{n+1} + q^{n}-q^{n-j}) =
-\frac{[n+1]}{[1]^{(q-1)q^n}} \zeta(q^{n+2}-q^{n-j}-1).
\end{align}
\end{theorem}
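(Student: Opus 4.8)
The plan is to prove each identity by direct manipulation of the power sums $S_d$, rewriting $\zeta(s_1,s_2)=\sum_{d>e\ge 0}S_d(s_1)S_e(s_2)$ as a single sum that visibly equals a rational multiple of $\zeta(s_1+s_2)=\sum_{d\ge 0}S_d(s_1+s_2)$. Summing over $e<d$ first gives $\zeta(s_1,s_2)=\sum_{d\ge 1}S_d(s_1)\,\sigma_{d-1}(s_2)$, where $\sigma_m(s):=\sum_{e=0}^{m}S_e(s)$, so everything rests on two ingredients: closed forms for the individual power sums $S_d(s_1)$ and $S_d(s_1+s_2)$ at the (very special) exponents occurring, and a closed form for the partial power sum $\sigma_{d-1}(s_2)$, which should be available precisely because $s_2$ is ``even'' in every family.

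For the first ingredient I would use the standard evaluations together with Frobenius: $S_d(1)=1/\ell_d$, hence $S_d(q^j)=1/\ell_d^{q^j}$ and more generally $S_d(p^m s)=S_d(s)^{p^m}$, plus the product formulas of \cite{T,Tm,LR} for $S_d$ at exponents of the shape $q^n-\sum_i q^{k_i}$ and at small sums $\sum_i q^{k_i}$. These are exactly the exponents in play: the right arguments $q^{n+1}-\sum q^{k_i}$, $q^2$, $q^3$, $q^{n+2}-q^{n-j}-1$, $a+b=s_1q^{n+1}+\sum_{i=1}^{s_2}(q^{n+1}-q^{k_i})$, and the left arguments $q^n-\sum q^{k_i}$, $1$, $s_1q^n$, $(q-1)q^n-1$, $2q-1$, $q^2-(q-1)$. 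For the second ingredient one evaluates $\sigma_{d-1}(s_2)$ by summing the relevant power-sum formula over $e$; the ``even'' hypothesis is what makes this collapse to a short explicit expression in the $[n]$'s and $\ell_n$'s --- e.g. for $s_2=(q-1)q^n$ one has $S_e((q-1)q^n)=S_e(q-1)^{q^n}$ and the partial sums of $S_e(q-1)$ telescope. Reassembling, $\sum_d S_d(s_1)\sigma_{d-1}(s_2)$ should collapse, after a second telescoping, to $c(t)\sum_d S_d(s_1+s_2)$ with $c(t)$ exactly the rational function claimed.

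With this in hand, \eqref{main2} ought to come out uniformly: $b$ is constructed so that $a+b$ is a carry-free base-$q$ sum and $S_d(a+b)$ relates to $S_d(a)=S_d(s_1)^{q^n}$ by precisely the factor that $\sigma_{d-1}(b)$ supplies, the constraints $1\le s_1\le q,\ 0\le s_2\le q-s_1,\ 0\le k_i\le n+1$ being exactly the no-carry conditions. The family \eqref{main1} is the analogous computation with $s_2=(q-1)q^n$, the factor $\prod_i[n-k_i]^{q^{k_i}}$ being what the $\sigma_{d-1}$-evaluation contributes; again $1\le s<q,\ 0\le k_i<n$ are the no-carry conditions. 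Equations \eqref{main3}--\eqref{main6} are lower-weight ``sporadic'' cases: here $s_2$ is not a pure $q$-power (it is $(q-1)(q^2+1)$, $(q-1)(q^2+q-1)$, $q^2-1$, or $(q-1)q^{n+1}+q^n-q^{n-j}$), so I would compute $\sigma_{d-1}(s_2)$ case by case, splitting on the leading $q$-adic digit of $s_2$ and using the explicit (often vanishing, or two-term) values of $S_e$ in each range of $e$, and then match coefficients to recover the numerators $1-[2]^q$, $1/\ell_1+1/\ell_2$, $-[n+1]$, etc.

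The main obstacle is exactly this second ingredient in the sporadic cases: obtaining the exact closed form of $\sum_{e<d}S_e(s_2)$ when $s_2$ has mixed base-$q$ digits, since Frobenius no longer reduces it to the $q$-power case and one must instead control the carries in the convolution identities for $S_e$ of a sum of two exponents --- the single-degree analogue of Chen's shuffle \cite{Ts}. Getting the signs and exponents of $c(t)$ right is then a bookkeeping matter, which I would cross-check against the known partial results \cite[Thm.~5.10.6]{T}, \cite[Thm.~6.3]{L12} and against the restriction from \cite{CPY} that $\zeta(s_2)$ be eulerian, i.e.\ $s_2$ ``even'' --- a consistency check all the displayed families pass by design.
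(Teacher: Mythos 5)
For the families \eqref{main1}, \eqref{main2} and \eqref{main6} your plan coincides with the paper's proof: write $\zeta(a,b)=\sum_{d\ge 1}S_d(a)S_{<d}(b)$, evaluate $S_d(a)$ and $S_{<d}(b)$ in closed form via the product formulas of \cite{LT} and the Carlitz evaluations $S_d(a)=1/\ell_d^a$ for $a\le q$, $S_d(q^j-1)=\ell_{d+j-1}/(\ell_{j-1}\ell_d^{q^j})$, $S_{<d}(q^j-1)=\ell_{d+j-1}/(\ell_j\ell_{d-1}^{q^j})$, observe that the product equals a $d$-independent rational function times $S_{d-1}(a+b)$, and sum over $d$. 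That is exactly what the paper does, and your identification of the no-carry constraints as the source of the hypotheses on $s_1,s_2,k_i$ is consistent with how those formulas apply.

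There is, however, a genuine gap in your treatment of \eqref{main3} and \eqref{main4}. You expect the sum to ``collapse, after a second telescoping, to $c(t)\sum_d S_d(s_1+s_2)$,'' but for these two identities the degreewise proportionality fails: the computation gives
\begin{align*}
S_d(a)S_{<d}(b)=\frac{1}{\ell_1^{q^2-1}\ell_2}\cdot\frac{t^q-t^{q^{d+2}}}{\ell_{d-1}^{q^3}},
\end{align*}
i.e.\ a factor $t^q-t^{q^{d+2}}$ that depends on $d$, so no telescoping or coefficient-matching identifies the sum term by term with $S_{d-1}(q^3)=1/\ell_{d-1}^{q^3}$. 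The missing idea is to recognize $\sum_{d\ge 0}t^{q^{d+3}}/\ell_d^{q^3}$ as $\log_C(t)^{q^3}$, where $\log_C(z)=\sum z^{q^d}/\ell_d$ is the Carlitz logarithm, and to use its functional equation $t\log_C(z)=\log_C(tz)+\log_C(z^q)$, which at $z=1$ gives $\log_C(t)=(t-1)\log_C(1)=(t-1)\zeta(1)$; raising to the power $q^3$ and combining with $t^q\zeta(q^3)$ yields the factor $1-[2]^q$. Without this transcendental input (the identity $\zeta(1)=\log_C(1)$ plus the functional equation) the series you are left with is not visibly a rational multiple of $\zeta(q^3)$, and the ``bookkeeping'' you defer cannot close the argument. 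The same remark applies to \eqref{main4}. (Your proposal to prove \eqref{main5} by the same direct method is harmless; the paper simply cites \cite[Thm.~5]{Tm} for it.)
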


Next we state a theorem (proved in section 5) giving zeta-like family of arbitrary depth. 

\begin{theorem}\label{mainII}
For any $q$,
\begin{align*}
\zeta(1,q-1, (q-1)q, \dotsc, (q-1)q^n)
=\frac{(-1)^{n+1}}{
[1]^{q^n} [2]^{q^{n-1}} \dotsm [n+1]^{q^0}
}
\zeta(q^{n+1}).
\end{align*}
\end{theorem}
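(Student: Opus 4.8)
The plan is to prove the identity by induction on $n$, after first reducing it to a single depth-lowering step. Write $Z_n:=\zeta(1,q-1,(q-1)q,\dots,(q-1)q^n)$. Since $\zeta(ps_1,\dots,ps_r)=\zeta(s_1,\dots,s_r)^p$ gives $\zeta(qs_1,\dots,qs_r)=\zeta(s_1,\dots,s_r)^q$, and since $\zeta(q^n)^q=\zeta(q^{n+1})$, the closed form asserted in the theorem follows from just two facts: \textup{(i)} $Z_0=\zeta(1,q-1)=\zeta(q)/\ell_1$; and \textup{(ii)} for every $n\ge 1$,
\[
Z_n \;=\; -\frac{1}{[n+1]}\,Z_{n-1}^{\,q}\;=\;-\frac{1}{[n+1]}\,\zeta\bigl(q,\,(q-1)q,\,(q-1)q^2,\dots,(q-1)q^n\bigr),
\]
where the last equality is the $q$-power rule applied to the tuple of $Z_{n-1}$. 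Fact \textup{(i)} is the instance $n=0$, $s_1=1$, $s_2=0$ of \eqref{main2} (recall $\ell_1=-[1]$). Granting \textup{(ii)}, a one-line induction — raise the inductive formula for $Z_{n-1}$ to the $q$-th power, multiply by $-1/[n+1]$, and use $\zeta(q^n)^q=\zeta(q^{n+1})$ — produces exactly the stated expression $Z_n=(-1)^{n+1}\zeta(q^{n+1})/\bigl([1]^{q^n}[2]^{q^{n-1}}\cdots[n+1]^{q^0}\bigr)$. So the entire content is \textup{(ii)}: replacing the two leading arguments $1,q-1$ by the single argument $q$ costs precisely the scalar $-1/[n+1]$, $n$ being the number of remaining (tail) entries.

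To prove \textup{(ii)}, I would expand both sides via $\zeta(s_1,\dots,s_r)=\sum_{d_1>\dots>d_r\ge 0}S_{d_1}(s_1)\cdots S_{d_r}(s_r)$ and isolate the common tail: both sides are sums over $d_3>\dots>d_{n+2}\ge 0$ of $\prod_{k=3}^{n+2}S_{d_k}\bigl((q-1)q^{k-2}\bigr)$ against a ``head factor'' — on the left $\sum_{d_1>d_2>d_3}S_{d_1}(1)S_{d_2}(q-1)$, on the right $-[n+1]^{-1}\sum_{e>d_3}S_e(q)$. The identity then follows from the basic Carlitz power-sum calculus already used for Theorem~\ref{main}: the closed forms for $S_d(1)$ and for the tails $\sum_{d>e}S_d(1)$ (the source of the $\ell$'s and $[\,\cdot\,]$'s), the multiplication formula expressing $S_d(s)S_d(s')$ as an $\F_p$-linear combination of power sums $S_d(s'')$ (so the diagonal part $d_1=d_2$ of $S(1)S(q-1)$ collapses toward $S(q)$), and the telescoping of the resulting products of brackets; the fact that each tail exponent $(q-1)q^j$ is `even' is used so that the tail power sums behave as $q^j$-th powers and mesh with the $[\,\cdot\,]$'s coming from the head.

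The main obstacle is precisely this power-sum identity in \textup{(ii)}. It does \emph{not} hold termwise in the tail indices — the left head factor does not depend on $n$ while $-[n+1]^{-1}$ does — so one must show that the full $d_3$-dependence of $\sum_{d_1>d_2>d_3}S_{d_1}(1)S_{d_2}(q-1)$ telescopes against the geometric-in-$q$ structure of the tail, leaving behind only the constant $-1/[n+1]$. Organizing this cancellation, and in particular pinning down the exact scalar rather than merely showing the ratio lies in $K$, is where the real work lies; as in Theorem~\ref{main}, the proof itself is purely combinatorial in power sums, the transcendence results of \cite{Ch,CPY} entering only afterward to upgrade ``ratio in $K$'' to ``eulerian''. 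A less elementary alternative would be to deduce eulerianity from the $t$-motivic interpretation of \cite{AT} via \cite{CPY} and then fix the constant separately, but the direct power-sum computation, parallel to the proofs behind Theorem~\ref{main}, is the route I would pursue.
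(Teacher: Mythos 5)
Your reduction is sound as far as it goes: the recursion $Z_n=-[n+1]^{-1}Z_{n-1}^{\,q}$ together with the base case $\zeta(1,q-1)=\zeta(q)/\ell_1$ does imply the stated closed form, and both are consistent with it. But the proposal stops exactly where the theorem begins. Your step (ii) \emph{is} the theorem (the $q$-power rule and the base case are routine), and you do not prove it; you sketch a strategy and then candidly note that ``organizing this cancellation \dots is where the real work lies.'' Worse, the organization you propose --- fixing the tail indices $d_3>\dotsb>d_{n+2}$ and comparing head factors --- runs into the obstruction you yourself identify: the head factor $\sum_{d_1>d_2>d_3}S_{d_1}(1)S_{d_2}(q-1)$ is an \emph{infinite} sum (a tail of the Carlitz logarithm) with no polynomial closed form in $d_3$, it is independent of $n$ while the target constant $-1/[n+1]$ is not, and the identity genuinely fails termwise in the tail. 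So the proposed decomposition does not localize the problem to anything computable by the Carlitz power-sum formulas, and no mechanism for the claimed telescoping is supplied. That is a genuine gap, not a detail.

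The paper conditions on the \emph{outermost} index instead: it proves the termwise identity
\begin{equation*}
S_d(1,q-1,(q-1)q,\dotsc,(q-1)q^n)=\frac{1}{\ell_{n+1}\ell_n^{q-1}\ell_{n-1}^{q(q-1)}\dotsm \ell_1^{q^{n-1}(q-1)}}\,S_{d-(n+1)}(q^{n+1}),
\end{equation*}
and then simply sums over $d$. The inner sums $s_n(d)=S_{<d}(q-1,\dotsc,(q-1)q^n)$ are finite, have explicit closed forms $f_n(d)$ built from the $\ell_i$, and satisfy a first-difference recursion $s_n(d+1)-s_n(d)=S_d(q-1)\,s_{n-1}(d)^q$ which the closed forms $f_n$ also satisfy; induction on $n$ (with base case from \cite[3.4.6]{Tm}) then finishes everything by purely polynomial manipulation. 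If you want to salvage your recursion (ii), the honest route is to derive it \emph{from} such a termwise statement: both $S_d(1,q-1,T_n)$ and $S_e(q,T_n')$ are constants times shifted power sums of $q^{n+1}$, but with different shifts, so they only become comparable after summing the outer index --- which is precisely why your tail-fixing comparison cannot work as written.
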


\section{Observations, Guesses and Conjectures}

Now we state some conjectures (based on the numerical data
and on consistency with the theorems and the proof methods)  
with varying degrees of confidence and evidence! 

\begin{conjecture}{\bf Tuple restrictions}  
 If $(s_1,\dotsc, s_r)$ is zeta-like, then
\begin{enumerate}
\item $s_i \le s_{i+1}$, $i = 1,  \dotsc, r-1$. Furthermore,
$(q-1) s_i \le s_{i+1} \le (q^2-1)s_i$.
\item $(s_2, \dotsc, s_r)$ is eulerian and $(s_1, \dotsc, s_{r-1})$ is zeta-like
\end{enumerate}
\end{conjecture}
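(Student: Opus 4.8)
The plan is to attack the two assertions separately, since they stand on quite different footing. The eulerian half of part (2) --- that $(s_2,\dots,s_r)$ is eulerian --- is essentially not open: it is exactly the conclusion of \cite{CPY} in its final (zeta-like) form, obtained through the interpretation \cite{AT} of $\zeta(s_1,\dots,s_r)$ as a period of an iterated extension of tensor powers of Carlitz-Anderson $t$-motives, and it already forces $s_i$ to be `even' for $i\ge 2$ and every tail $(s_k,\dots,s_r)$ to be eulerian. So the genuinely new content is (i) that the prefix $(s_1,\dots,s_{r-1})$ is zeta-like, and (ii) the size bounds $(q-1)s_i\le s_{i+1}\le (q^2-1)s_i$, the bare monotonicity $s_i\le s_{i+1}$ being subsumed by the lower bound.

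For (i) I would run the CPY motivic-Galois computation not on the sub-object that yields $(s_2,\dots,s_r)$ but on the quotient (or appropriate sub-quotient) of the associated iterated-extension $t$-motive obtained by cutting off the innermost tensor power, the one attached to $s_r$: a zeta-likeness relation for the whole tuple should force the motivic Galois group of this quotient to degenerate in exactly the way known to encode zeta-likeness of $\zeta(s_1,\dots,s_{r-1})$. The delicate point is that $\zeta(s_1,\dots,s_{r-1})$ is not literally a period of that quotient, so one must isolate the right auxiliary sub-extension and check that the relevant period-matrix entry specializes correctly; I expect this bookkeeping, together with verifying that the dimension drop really transfers, to be the bulk of the work, after which a downward induction on $r$ gives that every prefix $(s_1,\dots,s_k)$ is zeta-like.

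For (ii) one first uses part (2): eulerian-ness of every tail forces each $s_i$ with $i\ge 2$ to be a multiple of $q-1$, which, with the $p$-th power reduction to primitive tuples, sets up an induction on depth and reduces the heart of the matter to depth two. The lower bound $s_{i+1}\ge (q-1)s_i$ I would try to obtain from a valuation comparison in $K_\infty=\F_q((1/t))$: expand $\zeta(s_1,\dots,s_r)=\sum S_{d_1}(s_1)\cdots S_{d_r}(s_r)$ and $\zeta(s_1+\dots+s_r)$, use the well-understood $\infty$-adic behaviour of the power sums $S_d(s)$ (explicit, in the style of Carlitz, for `even' exponents, with the shuffle relations \cite{Ts} as a possible extra ingredient), and show a rational-multiple relation between the two is impossible once $s_{i+1}<(q-1)s_i$; the shapes of the ratios in Theorems \ref{main} and \ref{mainII} indicate what the extremal case looks like. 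The upper bound $s_{i+1}\le (q^2-1)s_i$ is where I expect the real obstacle, because it is a \emph{non}-zeta-like assertion and the only general tools for such assertions are the transcendence results of \cite{CY, Ch}, which presently apply only to tuples of quite special shape; without a structural criterion for zeta-likeness I see no clear route to it, which is presumably why the statement is offered as a conjecture. A realistic outcome of this plan is thus the prefix claim, the implications already recorded in \cite{CPY}, and the lower bound $(q-1)s_i\le s_{i+1}$, with the upper bound --- and hence the conjecture in full --- left open.
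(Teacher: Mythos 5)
This statement is offered in the paper as a \emph{conjecture}, supported only by the numerical data in Section 6 (continued-fraction detection of rational ratios) and by consistency with Theorems~\ref{main} and \ref{mainII}; the paper contains no proof of it, so there is no argument of the authors' to compare yours against. Your reading of the one piece that is actually a theorem is correct: the claim that $(s_2,\dotsc,s_r)$ is eulerian (hence every tail eulerian and every $s_i$ `even' for $i\ge 2$) is exactly the result of \cite{CPY} via the period interpretation of \cite{AT}, and the paper says as much in the remark following the conjecture. So on that sub-assertion you and the paper are in agreement, and neither of you supplies a new argument.

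For the remaining assertions your proposal is a research plan rather than a proof, and you should be clear-eyed that each step has a genuine gap. The ``prefix is zeta-like'' claim is not a formal consequence of the CPY machinery as it stands: as you note yourself, $\zeta(s_1,\dotsc,s_{r-1})$ is not a period of the quotient motive you propose to study, and no construction is given that makes it one; without that, the ``dimension drop transfers'' step is an unsupported hope, not a reduction. The lower bound $(q-1)s_i\le s_{i+1}$ cannot follow from a straightforward $\infty$-adic valuation comparison, because zeta-likeness is a statement about exact rationality of a ratio, not about leading terms; a valuation mismatch could at best rule out specific candidate ratios, and the known zeta-like families (e.g.\ \eqref{main5}, where the ratio is a sum $1/\ell_1+1/\ell_2$) show the ratios need not be monomial in the $\ell_d$. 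And you correctly concede the upper bound $s_{i+1}\le (q^2-1)s_i$ is out of reach with current tools. The honest summary is that your proposal establishes nothing beyond what \cite{CPY} already gives, which is also where the paper leaves matters; the statement remains a conjecture.
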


Note that part 2 can be iterated and implies that $s_i$ are `even' for $i\geq 2$ (already proved 
 together with the first part of (2), in \cite{CPY}, as mentioned before). 

\begin{conjecture} {\bf Splicing of tuples when $q=2$}
If $(s_1, \dotsc, s_k)$ and $(s_k,\dotsc,  s_r)$ are zeta like
and the total weight 
$\sum_{i=1}^r s_i$ is a power of $2$ or a power of $2$ minus one, then $(s_1,
\dotsc, s_r)$ is zeta-like, except when the two tuples to be spliced are
(1,1) and (1,1). 
\end{conjecture}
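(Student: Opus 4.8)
Since $q=2$, every weight is `even' (a multiple of $q-1=1$), so by Carlitz's theorem $\zeta(w)$ is a $K$-rational multiple of $\tilde{\pi}^{w}$; hence for $q=2$ a tuple is zeta-like if and only if it is eulerian. The hypotheses therefore read $\zeta(s_1,\dotsc,s_k)=\rho_1\tilde{\pi}^{w_1}$ and $\zeta(s_k,\dotsc,s_r)=\rho_2\tilde{\pi}^{w_2}$ with $\rho_1,\rho_2\in K^{\times}$, $w_1=s_1+\dotsb+s_k$, $w_2=s_k+\dotsb+s_r$, and what must be produced is a single $\rho\in K^{\times}$ with $\zeta(s_1,\dotsc,s_r)=\rho\,\tilde{\pi}^{w}$, where $w=w_1+w_2-s_k$ and, by hypothesis, $w\in\{2^{n},\,2^{n}-1\}$.

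The plan is to extend the telescoping method behind the proofs of Theorems~\ref{main} and~\ref{mainII}. Those arguments start from the nested-sum expansion
\begin{multline*}
\zeta(s_1,\dotsc,s_r)=\sum_{d\ge 0}\Bigl(\sum_{d_1>\dotsb>d_{k-1}>d}S_{d_1}(s_1)\dotsm S_{d_{k-1}}(s_{k-1})\Bigr)\,S_d(s_k)\\
\times\Bigl(\sum_{d>d_{k+1}>\dotsb>d_r\ge 0}S_{d_{k+1}}(s_{k+1})\dotsm S_{d_r}(s_r)\Bigr),
\end{multline*}
and exhibit the difference between $\zeta(s_1,\dotsc,s_r)$ and a fixed $K$-multiple of $\zeta(w)$ as a telescoping sum $\sum_d(T_d-T_{d-1})$ with an explicitly identified $T_d$ tending to $0$ in $K_\infty$; the underlying power-sum identity collapses an inner block $S_d(u_1)\dotsm S_d(u_j)$, or the associated truncated nested sum, to a $K$-multiple of $S_d(u_1+\dotsb+u_j)$. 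Granting the two input relations in this refined, ``level-wise'' form, one uses the tail relation to replace, for every $d$, the rightmost nested sum paired with $S_d(s_k)$ by $c_2\,S_d(w_2)$, and then the head relation to replace the resulting entry at level $d$ together with the leftmost nested sum by $c_1c_2\,S_d(w)$; the leftover terms from the two steps should combine and cancel, leaving $\zeta(s_1,\dotsc,s_r)=c_1c_2\,\zeta(w)=\rho\,\tilde{\pi}^{w}$. The role of the hypothesis $w\in\{2^{n},\,2^{n}-1\}$ is precisely to make the head- and tail-collapses \emph{composable}: over $\F_2$ the coefficients in these power-sum identities are multinomial coefficients read modulo $2$, and composing the two steps without an obstructing base-$2$ carry forces the base-$2$ digits of $w_1$, $w_2$ and of the overlap $s_k$ to fit together, which, given the already-known eulerian-ness of the shortened tuples $(s_2,\dotsc,s_k)$ and $(s_k,\dotsc,s_r)$, pins $w$ down to a power of $2$ or one less. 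The natural induction is on the depth $r$, with the depth-two families of Theorem~\ref{main} and the arbitrary-depth family of Theorem~\ref{mainII} as base cases; a more structural alternative would be the $t$-motivic route of \cite{CPY}, assembling the motive of the spliced tuple from the two input motives glued along their common $C^{\otimes s_k}$-layer and showing the composite iterated extension of Carlitz tensor powers still splits modulo torsion exactly when $w\in\{2^{n},\,2^{n}-1\}$.

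The main difficulty is twofold. First, the two inputs come to us only as \emph{abstract} zeta-like identities (some rational ratio exists), whereas the argument above needs them in the refined telescoping form; showing that \emph{every} zeta-like tuple over $\F_2$ --- not merely the explicit families of Theorems~\ref{main} and~\ref{mainII} --- admits such a refinement is the real crux, and amounts to a structural converse, at the level of power sums, of the implication proved in \cite{CPY}. Second, the proof must explain why the splice of $(1,1)$ with $(1,1)$ is the sole exception: there the total weight $3=2^{2}-1$ is admissible and $\zeta(1,1)=\zeta(2)/\ell_1$, but collapsing the outer pair reduces $\zeta(1,1,1)$ to a combination involving the non-monotone $\zeta(2,1)$, which by \cite[Thm.~5.10.12]{T} is not a $K$-multiple of $\zeta(3)$, so the telescoping cannot close up and $\zeta(1,1,1)$ is not zeta-like; a correct proof must pinpoint exactly this degeneracy. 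Smaller bookkeeping points remain: checking stability under the primitive reduction $\zeta(2s_1,\dotsc,2s_r)=\zeta(s_1,\dotsc,s_r)^{2}$, and treating $w=2^{n}$ and $w=2^{n}-1$ uniformly, the latter being the boundary where the exception sits.
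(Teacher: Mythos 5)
This statement is a \emph{conjecture} in the paper, supported only by numerical data (exhaustive for $q=2$ in depth $2$ up to weight $128$ and depth $3$ up to weight $32$); the paper offers no proof, so there is nothing to compare your argument against on the paper's side. What you have written is a proof \emph{strategy}, and by your own account it does not close. The decisive gap is the step where you pass from the hypothesis ``$(s_1,\dotsc,s_k)$ is zeta-like,'' which is merely the existence of some rational ratio between two elements of $K_\infty$, to a ``level-wise'' identity among the truncated power sums $S_d$. Rationality of the completed sum gives no control whatsoever over the individual $S_d$-blocks, and the telescoping proofs of Theorems~\ref{main} and~\ref{mainII} work only because those specific families admit closed-form evaluations via \eqref{Sd}--\eqref{Sltdqjminus1}; no mechanism is offered for producing such a refinement from an arbitrary zeta-like input, and indeed establishing one would amount to a structural converse of \cite{CPY} that is itself an open problem. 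The subsequent claims --- that the ``leftover terms from the two steps combine and cancel,'' and that absence of base-$2$ carries ``pins $w$ down'' to $2^n$ or $2^n-1$ --- are asserted without any computation, and the weight hypothesis in the conjecture is a \emph{hypothesis}, not something to be derived, so the digit argument is aimed at the wrong target.

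Two smaller points. Your reduction ``zeta-like $\Leftrightarrow$ eulerian for $q=2$'' is correct and consistent with the paper's remarks, and your diagnosis of the $(1,1)\ast(1,1)$ exception via $\zeta(1,1,1)$ and the non-zeta-likeness of $\zeta(2,1)$ (\cite[Thm.~5.10.12]{T}) is a reasonable heuristic for why an exception must be carved out; but a heuristic for the exception does not substitute for a proof of the general case. As it stands the proposal is an honest research plan, not a proof, and the statement should remain labeled as a conjecture.
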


{\bf Remarks} We have not seen any more failures in the limited data 
we have.  We are investigating the situation  
for general $q$, where splicing conditions seem to be much more 
restrictive and seem to depend (in the limited data we have) 
on combinatorics of digit expansions. But splicing of eulerian tuples 
seems to work in weights $q^n-1$. 

\begin{conjecture} {\bf Weight restrictions}  
\begin{enumerate}
\item Eulerian multizeta value (in depth $r>1$) can occur only in weights $p^m(q^k-1)$, with primitive 
ones only in weights $q(q-1)$ or $q^n-1$ for $q>2$ and in weights $2^n-1$ and $2^n$, if $q=2$. 
\item When $q = p$, depth $r>1$, the weight of zeta-like but non-eulerian tuple is
$p^m$ times number with no zero digit and at most one 1 digit.
\item In depth $r$, the smallest weight of zeta-like value is $q^{r-1}$.
\item When $q>2$, the smallest weight of eulerian value is 
$q^r-1$,  $(q-1)q$, and $q-1$ according as depth $r>2$, $r=2$ 
and $r=1$ respectively.  
\item   Weight $q^k$ is not a zeta-like weight 
of a primitive tuple,  if 
$k>r>3$, and also when $k>3$ if $r=3$, or $2$, where $r$ is the depth.  
\end{enumerate}
\end{conjecture}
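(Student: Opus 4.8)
A proof would naturally fall into two halves: the \emph{achievability} claims --- that each weight listed really does support a zeta-like (resp.\ eulerian, resp.\ primitive non-eulerian) value of the stated depth --- and the \emph{restriction} claims --- that no other, or no smaller, weight, and no weight with a forbidden digit pattern, can occur. The achievability claims are, in principle, a matter of pointing at the families already in hand, together with the substitution $(s_1,\dotsc,s_r)\mapsto(ps_1,\dotsc,ps_r)$, which multiplies the weight by $p$ and raises $\zeta$ to the $p$-th power, so that primitive examples generate the $p^m$-multiples in parts (1) and (2). Concretely, Theorem~\ref{mainII} with $n=k-1$ gives a zeta-like value of weight $q^k$ in depth $k+1$, and by \cite{CPY} its left truncation $(q-1,(q-1)q,\dotsc,(q-1)q^{k-1})$ is then eulerian, of weight $q^k-1$ in depth $k$; these handle the $q^r-1$ entries of parts (1) and (4) and make the exclusion in (5) sharp. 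Equation~\eqref{main2} with $n=0$ supplies $\zeta(1,q-1)$ of weight $q$; equations~\eqref{main1}--\eqref{main4} supply depth-two values in weights $q^{n+1}-\sum_i q^{k_i}$, $(q-1)q$ and $q^3$ (the last two with primitive representatives); equation~\eqref{main6} supplies the primitive non-eulerian weights $q^{n+2}-q^{n-j}-1$ relevant to (2). For $q=2$ one checks the small weights $2^n$ and $2^n-1$ by hand, and the shuffle relations of \cite{Ts} can be used to enlarge the stock of examples in each weight.

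For the restriction claims the machinery is: (a) the recursion of \cite{CPY}, that $(s_1,\dotsc,s_r)$ zeta-like forces $(s_2,\dotsc,s_r)$ eulerian, iterated downward --- this makes $s_i$ `even' for $i\ge2$ and lets one feed the conjecture back in at smaller depth; (b) the inequalities of the tuple-restriction conjecture, above all $s_i\le s_{i+1}$ and $(q-1)s_i\le s_{i+1}$, iterated to bound the weight from below; (c) the transcendence theorems \cite{Ch, CY}, which are what make ``zeta-like'' a rigid condition; and (d) base-$q$ digit combinatorics, used to decide which `even' integers $p^m(q^k-1)$ (and $p^m q(q-1)$) are eulerian weights in (1) and to exclude weights $q^k$ in (5). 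For parts (3) and (4), the plan is induction on depth: if $(s_1,\dotsc,s_r)$ is zeta-like (resp.\ eulerian) then $(s_2,\dotsc,s_r)$ is eulerian of depth $r-1$, hence of weight at least the depth-$(r-1)$ minimum furnished by the inductive hypothesis, and one adds $s_1\ge1$ (resp.\ $s_1\ge q-1$) together with $s_2\ge(q-1)s_1$ to reach the claimed $q^{r-1}$ (resp.\ $q^r-1$, $(q-1)q$, $q-1$). Part (5) should then follow: for $r<k$, part (1) puts the weight $q^k-s_1$ of the eulerian truncation into one of the allowed shapes, and chasing primitivity through the inequalities collapses the options except in the ``escape'' depths $r=k$ and $r\ge k+1$, the latter being realized by Theorem~\ref{mainII}.

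The principal obstacle --- and it is a real one --- is that the naive iteration in (b) falls well short of the sharp bounds. For $q=3$, $r=3$, the constraints $s_1\ge q-1=2$, $s_2\ge(q-1)s_1$, $s_3\ge(q-1)s_2$ give total weight only $\ge14$, against $q^3-1=26$; and in (3) one obtains only $1+(q-1)+\dotsb+(q-1)^{r-1}$, well below $q^{r-1}$. Bridging this gap seems to require the finer structure of eulerian tuples --- which first component $s_1$ may be prepended to a given minimal-weight eulerian tail, and which of the weights $p^m(q^k-1)$ are actually attained --- and that in turn appears to need either a classification of eulerian tuples (still only conjectural, even through the $t$-motive picture of \cite{AT}) or the digit-expansion combinatorics hinted at in the remarks after the splicing conjecture, which at present is only empirical. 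A realistic first target is therefore to extract the pieces that follow formally from \cite{CPY} and the tuple-restriction conjecture --- the depth-two case of (4) (the depth-one case being Carlitz together with \cite{CY}), the divisibility necessary condition $(q-1)\mid w$ in (1) (an eulerian value lies in $K_\infty$, while $\tilde{\pi}^w\in K_\infty$ only when $(q-1)\mid w$), the weak lower bound in (3), and the $r<k$ part of (5) granted part (1) --- and to leave the exact thresholds $q^{r-1}$ and $q^r-1$ and the full weight list as the residual hard core.
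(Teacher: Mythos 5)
The statement you have been asked to prove is, in the paper, explicitly a \emph{conjecture}: the authors give no proof, only the exhaustive numerical searches of Section 6 (continued-fraction detection of rational ratios, for small $q$, depth and weight) together with the observation that the listed weights are realized by the families of Theorems~\ref{main} and~\ref{mainII}. So there is no argument in the paper to measure your proposal against, and your own closing paragraph concedes the essential point: what you assemble does not establish any of the five assertions. Two concrete gaps make this unavoidable. First, the entire ``restriction'' half of your plan is conditional on the tuple-restrictions conjecture (Conjecture 4.1) --- the inequalities $s_i\le s_{i+1}$ and $(q-1)s_i\le s_{i+1}\le(q^2-1)s_i$ are themselves unproven; the only proved ingredient is the \cite{CPY} statement that $(s_2,\dotsc,s_r)$ is eulerian, which yields `even'-ness of $s_i$ for $i\ge2$ but no lower bounds on weight. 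Second, as you compute yourself, even granting Conjecture 4.1 the iteration produces bounds like $1+(q-1)+\dotsb+(q-1)^{r-1}$ for part (3) and $14$ for $(q,r)=(3,3)$ in part (4), far below the conjectured thresholds $q^{r-1}$ and $q^3-1=26$; and nothing in your sketch addresses the exact weight lists of parts (1) and (2) or the exclusion of $q^k$ in part (5) beyond restating them. The pieces that do go through --- realizability of the listed weights via Theorems~\ref{main} and~\ref{mainII} and \cite{CPY} truncation, and the necessity of $(q-1)\mid w$ for eulerian values (which the paper attributes to \cite{Ch}) --- are not the content of the conjecture.

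In short: your text is a sensible research plan that correctly identifies which fragments follow from known results and which constitute the open core, but the open core \emph{is} the statement. Had you intended to ``prove'' it, the missing idea is precisely the one the authors lack: an unconditional structural description of eulerian tuples (beyond the \cite{CPY} recursion) or a digit-combinatorial criterion sharp enough to pin down the admissible weights. Absent that, the honest conclusion is that the statement remains a conjecture supported by data, and your proposal should be presented as such rather than as a proof.
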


\pagebreak

{\bf Remarks} (0) Let us recall the known results for the Euler multizeta.
(We use the standard short-form $\{X\}_k$ standing for the tuple $X$ repeated 
$k$ times.) 
Euler proved that $\zeta(3, 1)=\zeta(4)/4$ and 
$\zeta(2, 1)=\zeta(3)$, which generalizes to zeta-like 
 $\zeta(2, \{1\}_k)=\zeta(k+2)$ (special case of Hoffman-Zagier duality relation) 
 and $\zeta(\{3, 1\}_k)=\zeta(\{2\}_{2k})/(2k+1)$
(Broadhurst's result, conjectured by Zagier) which is known to be eulerian as  
$\zeta(\{2\}_k)=\pi^{2k}/(2k+1)!$ 
(see e.g., \cite{Z}). In fact, $\zeta(\{2n\}_k)$ is also eulerian. 
(Proof:  Let the  induction hypothesis 
$P(k)$ be that $A_k:=\zeta(\{2n\}_k)$ and $B_{k, m}:=\sum \zeta(X(i))$ are eulerian 
for all $n, m$, where 
$X(i)$ runs through all length $k$ tuples with $k-1$ entries $2n$ and one entry 
$2nm$. The sum shuffle gives $\zeta(2n)A_k=(k+1)A_{k+1}+B_{k, 2}$ and 
$\zeta(2mn)A_k=B_{k+1, m}+B_{k, m+1}$ proving the result by induction.
For a proof using generating functions, see \cite{BBB}. We thank J. Zhao for 
the reference). 
 In our very limited numerical search 
(weight $\le 50$ for depth 2, and even lower for depths 3, 4), 
as well as limited search of the vast literature, we did not find any other 
zeta-like tuples. We do not know whether there are any more examples, 
or conjectures based on theoretical or numerical evidence.

For Euler's multizeta, all even weights $>2$ are eulerian 
in depth more than one, as $\zeta(2_k)$ is eulerian. 
 In our case, for $q=3$, even $\zeta(2, 2)$
is not eulerian by \cite[Thm. 5.10.12]{T}, and this conjecture 
predicts much stringent weight conditions. 
It is conjectured for the Euler multizeta that the eulerian case occurs only in even weights. 
In our case, we know by \cite{Ch} that the eulerian case occurs 
only in `even' weights. For the Euler's multizeta, 
$\zeta(2n, 2n)$ are eulerian  
of weight $4n$ and depth 2, though weight $4n+2$ does not seem to be eulerian weight in 
depth $2$.  
 
(1) The weight $p^m(q^k-1)$, with $m>0$ for eulerian value 
can occur with primitive tuples, e.g., $q=2$ and $(1, 1), (1, 3), (3, 5)$ 
or $q=3$ and $(2, 4)$. 

(2) The parts 3 and 4  are known for depth $1$, and the occurrence 
in predicted weights is either proved in our main theorem or 
also follows from the higher depth families conjectures below. So 
the `smallest' is the real conjectural part. More data may allow 
to conjecture the depth dependence of possible $m$ and $k$ in the 
first part. 

(3) While the Theorem~\ref{main} shows that weights $q, q^2, q^3$ 
are zeta-like weights for any $q$, the last part suggests 
that weight $q^4$ is not a weight of a zeta-like tuple (in depth 
more than one, of course).  

\begin{conjecture} {\bf Depth 2, weight at most $q^2$}
All zeta-like primitive tuples of weight at most $q^2$ and depth $2$ 
  are exactly $(i,j(q-1))$, $i = 1,\dotsc, q$, $j = i, \dotsc,  \lfloor
(q^2-i)/(q-1) \rfloor$ ($\lfloor (q^2-i)/(q-1) \rfloor$ equals $q+1$ or $q$,
depending if $i=q$ or $i<q$):
\begin{align*}
\begin{array}{*{6}{c} }
(1, q-1) & (1, 2(q-1)) & (1,3(q-1))& (1,4(q-1)) &\dotsc & (1,(q+1)(q-1))\\
& (2, 2(q-1)) & (2, 3(q-1)) & (2,4(q-1)) & \dotsc & (2, q(q-1))\\
& & (3, 3(q-1)) & (3, 4(q-1)) & \dotsc & (3, q(q-1))\\
& & & & & \vdots\\
& & & & & (q, q(q-1))
\end{array}
\end{align*}
\end{conjecture}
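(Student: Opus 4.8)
The plan is to establish the two directions of the asserted description separately. The ``these tuples are zeta-like'' direction is routine: every entry of the displayed array is a special case of Theorem~\ref{main}. Indeed, set $n=0$ in \eqref{main2}; then each exponent $q-q^{k_i}$ equals $q-1$ (if $k_i=0$) or $0$ (if $k_i=1$), so for $1\le s_1\le q$, $0\le s_2\le q-s_1$ and any $0\le t\le s_2$ one may arrange exactly $t$ of the $k_i$ to be $0$, obtaining
\begin{align*}
\zeta\bigl(s_1,\,(s_1+t)(q-1)\bigr)=\frac{1}{\ell_1^{\,s_1}}\,\zeta\bigl(s_1+(s_1+t)(q-1)\bigr).
\end{align*}
As $s_1$ runs over $1,\dotsc,q$ and $t$ over $0,\dotsc,q-s_1$ this produces exactly the tuples $(i,j(q-1))$ with $i\le j\le q$, that is, all of the array except the single entry $(1,(q+1)(q-1))=(1,q^2-1)$, which is precisely \eqref{main5}. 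So this half is a finite bookkeeping check against the already-proved Theorem~\ref{main}. (When $q$ is not prime a few array entries, e.g.\ $(p,\,p(q-1))$ when $q=p^2$, are non-primitive; one should read the statement as a claim about all zeta-like tuples of weight $\le q^2$, the primitive ones forming the obvious sub-list, using $\zeta(ps_1,ps_2)=\zeta(s_1,s_2)^p$.)

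The content lies in the converse: a zeta-like pair $(s_1,s_2)$ of weight $w=s_1+s_2\le q^2$ must appear in the array. First, by \cite{CPY} the tail $\zeta(s_2)$ is eulerian; being of depth one, this forces $s_2$ to be `even' via the Carlitz evaluation together with \cite{CY}, so write $s_2=m(q-1)$. Second --- and this is the crux --- one must prove the growth bound $(q-1)s_1\le s_2$, i.e.\ $m\ge s_1$. Granting it, the rest is arithmetic: from $s_1\ge1$ and $s_1+m(q-1)\le q^2$ we get $m(q-1)\le q^2-1=(q+1)(q-1)$, so $m\le q+1$; if $m=q+1$ then $s_1\le q^2-(q^2-1)=1$, while if $m\le q$ then $s_1\le m\le q$ already, so in all cases $s_1\le q$, and then $s_1\le m\le\lfloor(q^2-s_1)/(q-1)\rfloor$ is exactly the range $j=i,\dotsc,\lfloor(q^2-i)/(q-1)\rfloor$ of the array. (The hypothesized upper bound $s_2\le(q^2-1)s_1$ is then automatic in this weight range and plays no role.)

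The main obstacle is thus the growth bound $(q-1)s_1\le s_2$ for zeta-like depth-two tuples, precisely the part of the Tuple Restrictions Conjecture not yet established. I see two plausible routes. One is via the sum-shuffle relations of \cite{Ts}: expand $\zeta(s_1)\zeta(s_2)$ to express $\zeta(s_1,s_2)$, modulo $K\cdot\zeta(w)$, as a $K$-linear combination of the finitely many other multizeta values of depth $\le2$ and weight $w$, and then invoke the algebraic-independence theorems of \cite{CY,Ch} to show that $\zeta(s_1,s_2)/\zeta(w)$ cannot be rational once $(q-1)s_1>s_2$, the digit combinatorics of the coefficients obstructing the cancellations a rational value would require. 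The other is $\infty$-adic: compute $v_\infty$ of $\zeta(s_1,s_2)$ and of $\zeta(w)$ --- and, should the leading terms agree, the next coefficients too --- from the known formulas for the vanishing and leading behavior of the power sums $S_d(s)$, and read off from $\zeta(s_1,s_2)=c\,\zeta(w)$ with $c\in K$ an inequality relating $s_1$ and $s_2$. In either approach the difficulty is controlling the cancellation of power sums in characteristic $p$; it is the absence of such control in general depth and weight that keeps the full conjecture open.
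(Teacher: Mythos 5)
This statement is a conjecture, and the paper offers no proof of it: it only remarks that the displayed tuples are zeta-like by Theorem~\ref{main} and that the converse is the conjectural part. Your forward direction matches the paper's exactly (the $n=0$ specialization of \eqref{main2} plus \eqref{main5} for $(1,q^2-1)$, and your bookkeeping there checks out), and your converse correctly isolates what is genuinely open --- the growth bound $(q-1)s_1\le s_2$ from the Tuple Restrictions Conjecture, which neither the paper nor your two sketched routes establishes. So the proposal is as complete as the paper's own treatment and takes the same approach; just be explicit that the converse remains unproved rather than presenting the sketched routes as a proof.
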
 

Note that our theorems imply that those tuples are zeta-like. The converse is 
the conjectural part. 

\begin{conjecture} {\bf $q=2$, Depth $2$}
Let $q=2$, the zeta-like (eulerian equivalently) primitive tuples of depth two are 
exactly $(1, 1), (1, 3), (3, 5)$ and $(2^n-1, 2^n)$, $(2^n, 2^{n+1}+2^n-1)$. 
\end{conjecture}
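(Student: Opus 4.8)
For $q=2$ every positive integer is a multiple of $q-1=1$, hence `even', so by Carlitz's theorem $\zeta(w)$ is a nonzero $K$-multiple of $\tilde{\pi}^w$ for every weight $w$. Hence $\zeta(s_1,s_2)/\zeta(s_1+s_2)$ is rational if and only if $\zeta(s_1,s_2)/\tilde{\pi}^{s_1+s_2}$ is, so ``zeta-like'' and ``eulerian'' coincide and it suffices to classify the primitive zeta-like pairs. That the listed pairs are zeta-like is a direct specialization of Theorems~\ref{main} and \ref{mainII} at $q=2$: \eqref{main1} with $s=1$, $k_1=0$ gives $\zeta(2^n-1,2^n)$; \eqref{main2} with $s_1=s_2=1$, $k_1=0$ gives $\zeta(2^m,2^{m+1}+2^m-1)$; \eqref{main5} gives $\zeta(1,3)$; \eqref{main3} (equivalently \eqref{main4}) gives $\zeta(3,5)$; and $\zeta(1,1)$ occurs as the $n=0$ case of Theorem~\ref{mainII}. (The pair $(1,2)$ is the common first member of the two infinite families, and the three sporadic pairs have weights $2,4,8$; all five shapes are primitive, since in each the relevant coordinate is odd.) So only the converse --- that no other primitive pair is zeta-like --- remains.

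Here is the plan for the converse, with $w=s_1+s_2$. \emph{Step 1:} show $w\in\{2^n,2^n-1\}$; this is the $q=2$, depth-$2$ instance of the weight-restriction conjecture, and I would approach it through the $t$-motivic picture of \cite{AT, CPY}. By \cite{CPY}, $\zeta(s_2)$ is already eulerian; if moreover $\zeta(s_1,s_2)$ is eulerian, the iterated extension of tensor powers of the Carlitz motive computing it must, for its period ratio to lie in $\overline{K}$, split off a sub-object forcing a $\overline{K}$-linear relation between $\zeta(s_1)$, $\zeta(s_2)$ and $\tilde{\pi}^w$; tracking the base-$q$ digit combinatorics that governs the Anderson--Thakur polynomials and the $[k]$-factors visible in the formulas of Theorem~\ref{main} should constrain $w$ to the stated two families. \emph{Step 2:} establish the growth bounds $s_1\le s_2\le 3s_1$ (the $q=2$, depth-$2$ case of the tuple-restriction conjecture) via $\infty$-adic degree estimates on the partial power sums $S_d(s_1,s_2)$ together with the shuffle identities of \cite{Ts}; in particular $s_1\le s_2$ rules out pairs such as $(2,1)$, already shown non-zeta-like in \cite[Thm.~5.10.12]{T}. \emph{Step 3:} for each admissible weight, Steps 1 and 2 leave only finitely many candidate pairs --- all with $s_1,s_2$ odd when $w=2^n$, by primitivity --- and one eliminates every candidate off the list: writing the shuffle relation of \cite{Ts} that expresses $\zeta(s_1)\zeta(s_2)$ through $\zeta(s_1,s_2)$, $\zeta(s_2,s_1)$ and lower-depth terms, a zeta-like $\zeta(s_1,s_2)$ would by \cite{Ch} force $\zeta(s_1,s_2)$ and $\zeta(w)$ to be algebraically dependent, which is incompatible with \cite{Ch, CY} unless the relation degenerates in exactly the way realized by the listed tuples; the few remaining small weights $w\le 8$ are handled by the same mechanism (with $\zeta(2,1)$ covered by \cite[Thm.~5.10.12]{T}).

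The main obstacle is Step 1, and, in tandem, the uniformity of Step 3 across infinitely many weights. The transcendence machinery of \cite{Ch, CY, CPY} yields very strong independence conclusions, but only once an explicit algebraic or shuffle relation is available to feed it; and in the $\F_q[t]$ setting the shuffle algebra carries coefficients only in the prime field $\F_q$, which provably fails to detect all relations among multizeta values --- so producing the relations (or, motivically, the splitting sub-$t$-motives) that cover an \emph{entire} infinite family of weights, rather than sporadic instances, is precisely what is presently missing. Thus the forward implication is a corollary of the theorems above, whereas the converse is genuinely conjectural; the realistic route is to push the dimension count of \cite{CPY} from its current output (``$\zeta(s_2)$ eulerian'') to a complete depth-$2$ classification.
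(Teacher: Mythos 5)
This statement is a conjecture, and the paper itself establishes only the forward inclusion --- exactly as you do, by specializing Theorems~\ref{main} and~\ref{mainII} to $q=2$ (your identifications of $(2^n-1,2^n)$ with \eqref{main1}, $(2^n,2^{n+1}+2^n-1)$ with \eqref{main2}, $(1,3)$ with \eqref{main5}, $(3,5)$ with \eqref{main3}/\eqref{main4}, and $(1,1)$ with the $n=0$ case are all correct, as is the remark that zeta-like and eulerian coincide when $q=2$) --- while the converse rests solely on numerical data up to weight $128$. Your three-step programme for the converse is, as you yourself acknowledge, a research outline rather than a proof, so your treatment coincides with the paper's: the provable part is proved the same way, and the genuinely conjectural part remains open in both.
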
 

Again, our theorems imply all of these are eulerian, the converse is conjectural. 
(This seems to be true up to weight 128 from the numerical data). Note that 
4.1, 4.2, 4.3 part (1) and 4.5 conjecturally completely describe all eulerian 
tuples, if $q=2$, and remark after 4.2 reduces the general $q$ eulerian  case to depth 2. 
We are trying to verify the guess that all the depth 2 primitive eulerian tuples
are exactly (covered by Theorem~\ref{main} )$\zeta(q-1, (q-1)^2)$, $\zeta(q^n-1, (q-1)q^n)$ 
and $\zeta(q^n(q-1), q^{n+2}-1-q^n(q-1))$, for $q>2$. (These miss (1, 3), (3, 5) for $q=2$).

\begin{conjecture}{\bf Conjectural zeta-like families of arbitrary depth}
\begin{enumerate} 
\item 
For any $q$, $n \ge 1$ and $r \ge 2$, we have
\begin{align*}
\zeta(q^n-1, (q-1)q ^n, \dotsc, (q-1)q^{n+r-2})
=
\frac{[n+r-2] [n+r-3] \dotsm [n]}
{[1]^{q^{n+r-2}} [2]^{q^{n+r-3}} \dotsm [r-1]^{q^n}  }
\zeta(q^{n+r-1}-1).
\end{align*}

\item
For any $q$, $n\geq 0$,
\begin{align*}
\zeta(1, q^2-1, (q-1)q^2, \dotsc, (q-1)q^{n+1})=
\frac{[n+2]-1}{\ell_1[n+2]}\frac{1}
{
 \ell_1^{(q-1)q^n} \ell_{2}^{(q-1)q^{n-1}}
\dotsm \ell_{n-1}^{(q-1)q^2}\ell_n^{q^2}
}
\zeta(q^{n+2}).
\end{align*}

\item
For $q>2$, $n \ge 0$ and $r \ge 2$,
\begin{align*}
\zeta((q-1)q^n-1, (q-1)q^{n+1}, \dotsc, (q-1)q^{n+r-1})
\end{align*}
equals
\begin{align*}
\frac{(-1)^{r+1}
[n+r-1] [n+r-2] \dotsm [n+1]
}
{
[1]^{ (q^{r-1}-1)q^n }
[2]^{ (q^{r-2}-1)q^n }
\dotsm
[r-1]^{ (q-1)q^n }
}\zeta(q^{n+r}-q^n-1).
\end{align*}
\end{enumerate}
\end{conjecture}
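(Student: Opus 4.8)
The plan is to prove all three families by the power-sum method that already establishes Theorem~\ref{mainII} and the depth-two cases of Theorem~\ref{main}, running an induction that builds up the depth one tail entry at a time. First I would expand each value through the nested-sum definition $\zeta(s_1,\dotsc,s_r)=\sum_{d_1>\dotsb>d_r\ge 0}\prod_i S_{d_i}(s_i)$ and isolate the outer index: writing $D=d_1$, each identity becomes $\sum_{D\ge 1}S_D(s_1)\,G^{<D}$, where $G^{<D}$ is the depth-$(r-1)$ inner sum over $D>d_2>\dotsb>d_r\ge 0$ of the tail power sums. The decisive simplification is that every tail exponent has the shape $(q-1)q^k$, so by the characteristic-$p$ Frobenius relation $S_d(pm)=S_d(m)^p$ (the power-sum analogue of $\zeta(ps_1,\dotsc,ps_r)=\zeta(s_1,\dotsc,s_r)^p$ from Section~2) we get $S_d((q-1)q^k)=S_d(q-1)^{q^k}$; thus the entire tail is built from the single Carlitz power sum $S_d(q-1)$, whose explicit value I would take as the known starting point.

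Next I would establish, by induction on $r$, a closed form for the truncated tail $G^{<D}$ as an explicit rational function of the $[j]$'s and $\ell_j$'s depending on $D$. The engine is telescoping: the partial sums of $S_e(q-1)^{q^k}$ over $e<D$ collapse because $S_e(q-1)$ is a difference $f(e)-f(e+1)$, whence its $q^k$-th power is $f(e)^{q^k}-f(e+1)^{q^k}$ (again by Frobenius), so $\sum_{e<D}S_e(q-1)^{q^k}=f(0)^{q^k}-f(D)^{q^k}$. Nesting these collapses across the geometric tail $(q-1)q^n,(q-1)q^{n+1},\dotsc$ should produce exactly the descending product $[n+r-2]\dotsm[n]$ in the numerator and the Frobenius-twisted denominator $[1]^{q^{n+r-2}}\dotsm[r-1]^{q^n}$ of Part~(1). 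I would then feed this into the outer sum and show that $\sum_D S_D(s_1)\,G^{<D}$ reassembles into a constant multiple of $\sum_D S_D(w)=\zeta(w)$, where $w$ is the total weight, either term by term in $D$ or after a final telescoping of boundary contributions. The base cases $r=2$ are already in hand: Part~(1) is \eqref{main1} with $s=1$, $k_1=0$; Part~(3) is \eqref{main6} with $j=0$; and Part~(2) at $n=0$ is \eqref{main5}.

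The three parts differ only in their leading block, and each needs its own evaluation of how $S_D(s_1)$ meets the telescoped tail. Part~(1) has the clean single leading entry $q^n-1$ and should yield a pure monomial coefficient, directly paralleling Theorem~\ref{mainII}. Part~(3) replaces $q^n-1$ by $(q-1)q^n-1$ and carries the sign $(-1)^{r+1}$ and the shifted product $[n+r-1]\dotsm[n+1]$; the restriction $q>2$ is forced already at the base case \eqref{main6}, and I expect the digit-expansion constraints visible there to be precisely what the induction requires. Part~(2) is the genuinely different one: its leading pair $(1,q^2-1)$ makes the outer contribution split into two surviving terms rather than collapse to a monomial, which is exactly the source of the non-monomial coefficient $([n+2]-1)/(\ell_1[n+2])$ (already seen at $n=0$, where \eqref{main5} gives $1/\ell_1+1/\ell_2$).

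The hard part will be proving the closed form for $G^{<D}$ in full generality. The telescoping is transparent for a single tail entry, but iterating it across a tail of length $r-2$ forces me to control how each Frobenius twist $x\mapsto x^{q}$ interacts with the boundary terms left by the previous telescoping, and to keep the $q$-power bookkeeping and signs consistent (these depend on the parity of $q$ through $\ell_1^{q^n}=(-1)^{q^n}[1]^{q^n}$). For Part~(2) there is the added difficulty of tracking the two surviving boundary terms through the induction and showing they assemble into $([n+2]-1)/(\ell_1[n+2])$ rather than cancelling, and for Part~(3) of verifying that the $q>2$ digit conditions persist at every depth. I would first settle the $r=3$ (respectively $n=1$) cases by hand to pin down the exact telescoping identity, and then promote it to the general induction.
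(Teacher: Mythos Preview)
The statement you are attempting is a \emph{conjecture} in the paper, not a theorem: the authors explicitly write, immediately after stating it, that ``it seems quite likely that these families can be proved by a proof similar to that of Theorem~\ref{mainII} below, but this has not been carried out yet.'' So there is no proof in the paper to compare your proposal against; only the depth-two base cases are actually established (via Theorem~\ref{main}). Your plan is precisely the approach the authors say they believe should work, but they stopped short of executing it.

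On the substance of your outline: the overall architecture---induction on depth, reducing the tail to Frobenius powers of $S_d(q-1)$, and proving a closed form for the truncated inner sum $G^{<D}$---is the natural extension of the Theorem~\ref{mainII} argument, and your identification of the base cases is correct. However, one specific mechanism you invoke is not right as stated. You write that ``$S_e(q-1)$ is a difference $f(e)-f(e+1)$, whence its $q^k$-th power is $f(e)^{q^k}-f(e+1)^{q^k}$ \dots\ so $\sum_{e<D}S_e(q-1)^{q^k}=f(0)^{q^k}-f(D)^{q^k}$.'' But $S_e(q-1)=1/\ell_e^{q-1}$ is not itself a first difference in any way that makes $\sum_{e<D}S_e(q-1)^{q^k}$ collapse by Frobenius alone: the closed form $S_{<D}((q-1)q^k)=(S_{<D}(q-1))^{q^k}$ that you need comes from the multiplicativity relation \eqref{Sltd}, not from a telescoping of $q^k$-th powers, and in general $\sum_e x_e^{q^k}\neq(\sum_e x_e)^{q^k}$. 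The actual induction in the paper's proof of Theorem~\ref{mainII} works differently: one \emph{defines} the candidate closed form $f_n(d)$ and verifies the recursion $f_n(d+1)-f_n(d)=S_d(1)\,f_{n-1}(d)^q$ directly, matching it to the obvious recursion for the partial sums. Your induction will have to proceed the same way, and the genuine work---which the authors did not complete---is writing down the correct $f$ for each of the three families and checking that difference identity; your proposal does not yet do this.
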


It seems quite likely that these families can be proved by a proof similar 
to that of Theorem~\ref{mainII} below, but this has not been carried out yet. Note also 
that in the depth 2 case, all of these are proved in Theorem~\ref{main}. (Here the 
second one reduces for $n=0$ to \ref{main5} by usual conventions on empty products, 
 sums, patterns and indexing). 

\section{Proofs}

The following formulas, which are consequence of Theorems 1 and 3 in  \cite{LT},
will be used in the proof of the main
theorem.
\begin{enumerate}
\item For $1 \le s < q$ and $0 \le k_i <k$
with $1 \le i \le s$, we have
\begin{align}
\label{Sd}
S_d(q^k- \sum_{i=1}^s q^{k_i})& = \ell_d^{(s-1)q^k} S_d(q^k-q^{k_1}) \dotsm
S_d(d,q^k -q^{k_s}).
\end{align}

\item For $1\leq s \leq q$, and
any $0\leq k_i\leq k$, with $1\leq i\leq s$, we have
\begin{align}
\label{Sltd}
S_{<d} ( \sum _{i=1}^s(q^k-q^{k_i}) )
=
\prod _{i=1}^s S_{<d}(q^k -q^{k_i}).
\end{align}
\end{enumerate}

We also recall Carlitz' evaluations (see e.g., \cite[3.3.1, 3.3.2]{Tm})

\begin{align}
\label{Sda}
S_d(a)=1/\ell_d^a, \ \ (a\leq q)
\end{align}
\begin{align}
\label{Sdqjminus1}
S_d(q^j-1) = {\ell_{d+j-1}}/{\ell_{j-1} \ell_d^{q^j}}
\end{align} 

\begin{align}
\label{Sltdqjminus1}
S_{<d}(q^j-1) = { \ell_{d+j-1}  }/{ \ell_j \ell_{d-1}^{q^j}},
\end{align} 

\begin{proof}[Proof of Theorem~\ref{main}]
Let $a = q^n- \sum _{i=1}^s q^{k_i}$ and $b = (q-1)q^n$.
By definition, we have
$$\zeta(a,b)
= \sum _{d=1}^\infty S_d(a,b)
=\sum _{d=1}^\infty S_d(a)S_{<d}(b).$$
Using \eqref{Sd},  \eqref{Sltd}, \eqref{Sdqjminus1} and \eqref{Sltdqjminus1}, 
by straight calculations we get
$$
S_d(a)S_{<d}(b) = \frac{(-1)^s}{\ell_1^{q^n}} \prod _{i=1}^s
[n-k_i]^{q^{k_i}} S_{d-1}(a+b).
$$ 
By summing over $d$ the claim \eqref{main1} follows.
The proofs of claims   \eqref{main2} and \eqref{main6} are similar, once we note
that for
\eqref{main2} we have
\begin{align*}
a + b = q^{n+2} - (q-s_1-s_2)q^{n+1} - \sum _{i=1}^{s_2} (q^{n+1}-q^{k_i}),
\end{align*}
and for \eqref{main6},  the requirement $q>2$ guarantees
that the formula for $S_d(q^{n+1}-q^n-1)$ can be applied.

Now, let $a = q^2-(q-1)$ and $b=(q-1) (q^{2}-q) + (q^{2}-1)$.
Using formulas \eqref{Sd} and \eqref{Sltd} again, a straight calculations yields
\begin{align*}
S_d(a, b) & =
\frac{1}{\ell_1^{q^2-1} \ell_2}
\frac{ t^q- t^{q^{d+2}} }{ \ell_{d-1}^{q^3} }.
\end{align*}
Recall that the inverse around origin of the Carlitz exponential $e_C(z)$ is
the Carlitz logarithm $\log(z) = \sum z^{q^d}/\ell_d$ and it satisfies $t
\log(z) = \log(tz) +\log(z^q)$. Therefore, $ t\log(1) = \log(t) + \log(1)$ or
equivalently
$\log(t) = (t  - 1)\log(1)$. Since $\zeta(1) = \log(1)$ and $\zeta(1)^{q^3} =
\zeta(q^3)$, by summing over $d$ we get
\begin{align*}
\sum _{d=1}^\infty \frac{ t^q- t^{q^{d+2}} }{ \ell_{d-1}^{q^3} }
& =
t^q \sum _{d=0}^\infty \frac{1}{\ell_d^{q^3}} -
\sum _{d=0}^\infty \frac{t^{q^{d+3}}}{\ell_d^{q^3}}\\
& =
t^q \log(1)^{q^3} - \log(t)^{q^3}\\
& =
t^q \zeta(q^3) - (t^{q^3}-1) \zeta(q^3)\\
& = ( -[2]^q + 1)\zeta(q^3),
\end{align*}
and claim \eqref{main3} follows.

Now, for  \eqref{main4}, let $a = 2q-1$ and $b = q^2-q + (q-1)(q^2-1)$. We have
\begin{align*}
S_d(a, b) & =
\frac{1}{\ell_1^{q+1} \ell_2^{q-1}}
\frac{(-[d+1]^q)}{\ell_{d-1}^{q^3}}.
\end{align*}
By summing over $d\ge 1$, we obtain
\begin{align*}
\sum _{d=1}^\infty \frac{t^q-t^{q^{d+2}}}{\ell_{d-1}^{q^3}}
 = (-[2]^q+1 )\zeta(q^3),
\end{align*}
and the result follows.

Finally, \eqref{main5} is proved in \cite[Thm. 5]{Tm}
\end{proof}

\begin{proof}[Proof of Theorem~\ref{mainII}]
We claim that
$$ 
S_d(1, q-1, q(q-1), \cdots, q^n(q-1))=\frac{1}{\ell_{n+1}\ell_n^{q-1}\ell_{n-1}^{q(q-1)}
\cdots \ell_1^{q^{n-1}(q-1)}}S_{d-(n+1)}(q^{n+1}).
$$
Summing  the claimed equality over $d$ proves the Theorem. 

For $n=1$ and all $d$, this is proved in \cite[3.4.6]{Tm}. We prove 
it by induction by assuming  it for $n$ replaced by $n-1$, and considering it for $n$ 
as claimed. 

For $d<n+1$ both sides are zero, and for $n=d+1$, it follows 
using (we use these often below) \eqref{Sda} for $a=1, q-1$ together with the obvious $S_d(q^nj)=
S_d(j)^{q^n}$. We  write $s_n(d):= \sum_{j=0}^{d-1} S_j(q-1, q(q-1), \cdots, q^n(q-1))$ 
and $f_n(d):= \ell_d/(\ell_{n+1}\ell_n^{q-1}\cdots \ell_1^{q^{n-1}(q-1)}\ell_{d-(n+1)}^{q^{n+1}}).$
It is enough to show that $s_n(d)=f_n(d)$ for all $d>n+1$. 
Now $s_n(d+1)-s_n(d)$ is 
$$S_d(q-1, \cdots, q^n(q-1))=S_d(1)\sum_{j=0}^{d-1}S_j(q(q-1), 
\cdots, q^n(q-1))=S_d(1)s_{n-1}(d)^q.$$
Now $s_{n-1}(d)=f_{n-1}(d)$ by induction, and 
a simple manipulation shows that $f_n(d+1)-f_n(d)=
S_d(1)f_{n-1}(d)^q$ thus completing the proof of the claim and the theorem 
by induction. 
\end{proof}

{\bf Remarks} It might be worthwhile to point out a very special low weight 
case of (2) of Theorem 3.1 that $(n, m(q-1))$ is zeta-like, if 
$1\leq n\leq q$ and $n\leq m\leq q$. 

\section{Data}

Theory of continued fractions for function fields was first developed by 
Emil Artin in his thesis. (See \cite[Chap. 9]{T} for a survey.). We use them 
to find the zeta-like values as follows. We calculate the multizeta divided 
by zeta of same weight numerically (i.e., approximation where we use 
first few degrees rather than all), and calculate its continued fraction. 
If the ratio of actual values is rational, the continued fraction thus 
calculated will be same as the continued fraction of this rational 
for the first few partial quotients and then there will be very large partial 
quotient indicating small error in approximation. We detect this and 
then we double check 
by increasing the precision that we do get  the stabilized part, followed by increasing 
partial quotient (corresponding to reducing error), followed by non-stabilized 
part.   

The calculation was done (in stages, with guesses verified with more data) over several months by programing in SAGE and using laptops 
and mainframes. In lower depths, and small weights, small $q$'s 
 the 
calculation was exhaustive (i.e., going through all tuples looking for zeta-like
values), and sometimes guesses of higher depth, weight, $q$'s 
were checked separately to some extent. For $q = 2$, depth $2$ and $3$ and weight up to $128$ and $32$, respectively,
and for $q = 3$, depth $2$ and $3$ and weight up to $81$, calculation was
exhaustive. For $q = 4, 5$,  depths 2 and 3, we  went through all weights up to
$q^3$, but assuming that $s_i$ is `even' (called restrictive search) 
for $i \geq 2$, and $s_i \le s_{i+1}$.
However, we  checked to some extent that  the tuples not satisfying the increasing
condition are not zeta-like.
Also, we decreased precision often, otherwise the calculation would have taken much more
time. 

We only list {\it primitive} tuples. The tuples marked with * are covered by the theorems.

\pagebreak

\subsection{Data for \texorpdfstring{$q=2$}{q2}} Zeta like tuples of depth 2 and weight at most $128$.
\begin{center}
\begin{tabular}{ *{7}{c}} \toprule
(1, 1)*  & (1, 2)* & (1, 3)*  & (2, 5)*   & (3, 4)* &  (3, 5)*  & (4, 11)*\\
(7, 8)* & (8, 23)* & (15, 16)* & (31, 32)* & (16, 47)* & (32, 95)* & (63, 64)*\\
\bottomrule
\end{tabular}
\end{center}

\begin{center}
$q = 2$. Zeta like tuples of depth 3, weight at most $q^5 =32$, and more.

\begin{tabular}{ *{6}{c} } \toprule
(1, 1, 2)* & (1, 2, 4) & (1, 2, 5) & (1, 3, 4) & (3, 4, 8) &
(7, 8, 16)\\
(15, 16, 32) & (31, 32, 64)\\
\bottomrule
\end{tabular}
\end{center}


\begin{center}
$ q= 2$. Some zeta like tuples of depth 4.

\begin{tabular}{*{5}{c}}\toprule
(1, 1, 2, 4)* & (1, 2, 4, 8) & (1, 3, 4, 8) & (3, 4, 8, 16)
& (7, 8, 16, 32)\\
(15, 16, 32, 64) & (31, 32, 64, 128)\\
\bottomrule
\end{tabular}
\end{center}

\begin{center}
$ q= 2$. Some zeta-like tuples of depth 5.

\begin{tabular}{*{4}{c}}\toprule
(1, 1, 2, 4, 8)* & (1, 2, 4, 8, 16) & (1, 3, 4, 8, 16)
& (3, 4, 8, 16, 32)\\
(7, 8, 16, 32, 64) & (15, 16, 32, 64, 128) &
(31, 32, 64, 128, 256)\\
\bottomrule
\end{tabular}
\end{center}

\begin{center}
$ q= 2$. Some zeta-like tuples of depth 6.

\begin{tabular}{*{3}{c}}\toprule
(1, 1, 2, 4, 8, 16)* & (1, 2, 4, 8, 16, 32) & (1, 3, 4, 8, 16, 32)\\
(3, 4, 8, 16, 32, 64) & (7, 8, 16, 32, 64, 128) & (15, 16, 32, 64, 128, 256)\\
(31, 32, 64, 128, 256, 512)\\
\bottomrule
\end{tabular}
\end{center}


\subsection{Data for \texorpdfstring{$q=3$}{q3}}
Zeta-like tuples of depth 2 and weight up to $q^4 = 81$:
 
\begin{center}

\begin{tabular}{ *{6}{c} } \toprule
(1, 2)*  &  (1, 4)*  &  (1, 6)* &  (1, 8)* &  (2, 4)*  &  (2, 6)* \\
(3, 14)* &  (3, 20)* &  (3, 22)* &  (5, 12)* &  (5, 18)*  &  (5, 20)* \\
(5, 22)*  &  (6, 20)* &  (7, 18)* &  (7, 20)* &  (8, 18)* &  (9, 44)*\\
(9, 62)*  &  (9, 68)*  &  (9, 70)* &  (15, 62) &  (17, 36)* &  (17, 54)*\\
(17, 60)* &  (17, 62)* &  (18, 62)* &  (23, 54)* &  (25, 54)* &  (26,
54)*\\
\bottomrule
\end{tabular}
\end{center}

\begin{center}
Zeta-like tuples of depth 3,  weight $\leq q^4 = 81$ and more:

\begin{tabular}{c c c c} \toprule
(1, 2, 6)*  & (1, 6, 18)  & (2, 6, 18) & (1, 6, 20) \\
(1, 8, 18) & (5, 18, 54) & (7, 18, 54) & (8, 18, 54) \\
(17, 54, 162) & (23, 54, 162) \\
\bottomrule
\end{tabular}
\end{center}

\begin{center}
Some zeta-like tuples of depth 4.
\begin{tabular}{c c c c} \toprule
(1, 2, 6, 18)* & (1, 6, 18, 54) & (2, 6, 18, 54) &
(1, 8, 18, 54)\\
(5,18, 54, 162) & (7, 18, 54, 162) &(8, 18, 54, 162)
& (17, 54, 162, 486)\\
\bottomrule
\end{tabular}
\end{center}



\subsection{Data for \texorpdfstring{$q=4$}{q4}}
Zeta-like tuples (restricted) of depth 2, weight $\leq q^3= 64$:

\begin{center}

\begin{tabular}{ *{6}{c} } \toprule
(1, 3)* & (1, 6)* & (1, 9)* & (1, 12)* & (1, 15)* & (2, 9)*\\
(2, 21) & (2, 27) & (3, 9)* & (3, 12)* &  (4, 27)* & (4, 39)*\\
(4, 51)* & (4, 57)* & (5, 18) & (5, 24) & (5, 27) &
(7, 24)\\
(7, 36) & (7, 39) & (7, 48)*& (7, 51) &
(7, 54) & (7, 57)*\\
(8, 39)* & (8, 51)* & (10, 51) & (11, 36)* & (11, 48)*
& (11, 51)*\\
(12, 51)* & (13, 48)* & (13, 51)* & (15, 48)* \\
\bottomrule
\end{tabular}
\end{center}

\begin{center}
Zeta-like tuples (restricted search) of depth 3 up to weight $q^3 = 64$:
\begin{tabular}{ *{4}{c} }\toprule
(1, 3, 12)* & (1, 6, 24) & (1, 12, 48) & (3, 12, 48)\\
(1, 12, 51) & (1, 15, 48) &  & \\ \bottomrule
 \end{tabular}
\end{center}


\pagebreak
\subsection{Data for \texorpdfstring{$q=5$}{q5}}
Zeta-like tuples (restricted) of depth 2,  weights $\leq 125$:
\begin{center}

\begin{tabular}{ *{6}{c} } \toprule
(1, 4)*   & (1, 8)*   & (1, 12)*  &  (1, 16)*  &  (1, 20)*  &  (1, 24)*\\
(2, 8)*   & (2, 12)*  & (2, 16)*  &  (2, 20)*  &  (3, 12)*  &   (3,16)*\\
(3, 20)*  & (4, 16)*  & (4, 20)*  &  (5, 44)*  &  (5, 64)*   &  (5, 68)*\\
(5, 84)*   & (5, 88)*   &  (5, 92)*  & (5, 104)*   &  (5, 108)*  & (5, 112)*\\
(5, 116)*  & (9, 40)   & (9, 60)   &  (9, 64)   & (9, 80)    & (9, 84)\\
(9, 88)   & (9, 100)*  &  (9, 104) & (9, 108)   &  (9, 112)  & (9, 116)*\\
(10, 64)* & (10, 84)*  & (10, 88)*  & (10, 104)*  & (10, 108)*  & (10, 112)*\\
(13, 60)  & (13, 80)  &  (13, 84) & (13, 100)*  & (13, 104)  & (13, 108)\\
(13, 112) & (14, 60)  & (14, 80)  & (14, 84)   & (14, 100)*  & (14, 104)\\
(14, 108) & (15, 84)* &  (15, 104)*&  (15, 108)* & (17, 80)   & (17, 100)*\\
(17, 104) & (17, 108) &  (18, 80) & (18, 100)*  & (18, 104)  & (19, 80)*\\
(19, 100)* & (19, 104)* &  (20, 104)*&  (21, 100)* & (21, 104)*  & (22, 100)*\\
(23, 100)* &  (24, 100)*\\
\bottomrule
\end{tabular}
\end{center}

\begin{center}
$q = 5$. Some zeta-like tuples.

\begin{tabular}{ *{4}{c} } \toprule
(1, 4, 20)* & (1, 20, 104) & (1, 24, 100) & (2, 20, 100)\\
(4, 20, 100) & (3, 20, 100) & (3, 20, 100, 500) &
(19, 100, 500)\\
(19, 100, 500, 2500)
\\
\bottomrule
\end{tabular}
\end{center}


\begin{center}
Summary of depth and weights classified by eulerian and zeta-like.
\begin{tabularx}{\linewidth}{ *{2}{c} X X} \hline
$q$ & depth & Eulerian weights  & Zeta-like weights\\ \hline
2 & 2 & 2, 3, 4, 7, 8, 15, 31, 63 &  \\
2 & 3 & 4, 7, 8, 15, 31, 63, 127 &  \\
2 & 4 & 8, 15, 16, 31, 63, 127, 255 &  \\
2 & 5 & 16, 31, 32, 63, 127, 255, 511 &  \\
2 & 6 & 32, 63, 64, 127, 255, 511, 1023 &  \\
\hline
3 & 2 & 6, 8, 26, 80 & 3, 5, 7, 9, 17, 23, 25, 27, 53,
71, 77, 79 \\
3 & 3 & 26, 80 & 9, 25, 27, 77, 79, 233, 239 \\
3 & 4 & 80, 242 & 27, 79, 81, 239, 241, 719 \\
\hline
4 & 2 & 12, 15, 63 & 4, 7, 10, 11, 13, 16, 23, 29, 31,
32, 43, 46, 47, 55, 58, 59, 61, 62, 64 \\
4 & 3 & 63 & 16, 31, 61, 64 \\
\hline
5 & 2 & 20, 24, 124 & 5, 9, 10, 13, 14, 15, 17, 18, 19,
21, 22, 23, 25, 49, 69, 73, 74, 89, 93, 94, 97, 98, 99, 109, 113, 114,
117, 118, 119, 121, 122, 123, 125 \\
5 & 3 & 124 & 25, 122, 123, 125, 619 \\
5 & 4 &  & 623, 3119 \\
\hline
\end{tabularx}
\end{center}

\newpage

{\bf Acknowledgments.}
The first author is grateful to the
 Centro de C\'omputo del Cuerpo Acad\'emico Modelado y Simulaci\'on
Computacional de Sistemas F\'isicos (UADY-CA-101) de la Universidad Aut\'onoma
de Yucat\'an as well as to the University of Arizona 
for allowing us  to use their computer servers for our computations.


\end{document}